\documentclass{amsart}
\usepackage[twoside,lmargin=9em,rmargin=9em,top=4cm,bottom=4cm]{geometry}
\usepackage{graphicx,xcolor}
\usepackage{color}

\usepackage{amsmath}
\usepackage{amssymb}
\usepackage{amsthm}
\usepackage{tikz}
\usepackage[all]{xy}
\usepackage{hyperref}

\newtheorem{thm}{Theorem}[section]
\newtheorem{prop}[thm]{Proposition}
\newtheorem{lemma}[thm]{Lemma}

\usepackage{amsthm}

\theoremstyle{remark}
\newtheorem{rem}[thm]{Remark}
\newtheorem{example}[thm]{Example}

\numberwithin{equation}{section}

\def\CC{\mathord{\mathbb{C}}}

\def\RR{\mathord{\mathbb{C}}}

\newcommand{\bm}[1]{\mbox{\boldmath{$#1$}}}

\usepackage[colorinlistoftodos]{todonotes}

\title[relativistic Toda lattice of type B]{relativistic Toda lattice of type B and quantum $K$-theory of type C flag variety}
\author[T.~Ikeda]{Takeshi Ikeda}
\address{Faculty of Science and Engineering, Waseda University, 3-4-1 Okubo, Shinjuku-ku, Tokyo 169-8555 Japan}
\email{gakuikeda@waseda.jp}

\author[S.~Iwao]{Shinsuke Iwao}
\address{Faculty of Business and Commerce, Keio University, 4–1–1 Hiyosi, Kohoku-ku, Yokohama-si, Kanagawa 223-8521, Japan.}
\email{iwao-s@keio.jp}

\author[T.~Kouno]{Takafumi Kouno}
\address{Faculty of Science and Engineering, Waseda University, 3-4-1 Okubo, Shinjuku-ku, Tokyo 169-8555 Japan}
\email{t.kouno@aoni.waseda.jp}

\author[S.~Naito]{Satoshi Naito} 
\address{Department of Mathematics, Institute of Science Tokyo, 2-12-1 Oh-Okayama, Meguro-ku, Tokyo 152-8551 Japan}
\email{naito@math.titech.ac.jp}

\author[K.~Yamaguchi]{Kohei Yamaguchi}
\address{Faculty of Science and Engineering, Waseda University, 3-4-1 Okubo, Shinjuku-ku, Tokyo 169-8555 Japan}
\email{yamaguchi\_86@aoni.waseda.jp}
\keywords{Relativistic Toda lattice, Quantum $K$-theory of flag varieties, Type B, Type C}
\subjclass{37K10,14N35,14M15}

\begin{document}
\maketitle

\begin{abstract}
We introduce a classical integrable system associated with the torus-equivariant quantum $K$-theory of type C flag variety. We prove that its conserved quantities coincide with the generators of the defining ideal of the Borel presentation of the quantum $K$-ring obtained by Kouno and Naito. In particular, the Hamiltonian of the system is naturally regarded
as a type B analogue of the relativistic Toda lattice introduced by Ruijsenaars. We also construct B\"acklund transformations describing the discrete time evolution of the system.

This construction makes explicit the integrable structure underlying the quantum $K$-theory and provides a framework for further studies of the $K$-theoretic Peterson isomorphism.
\end{abstract}

\section{Introduction}

Since Givental and Lee \cite{GL03} established a connection between the quantum $K$-theory of $G/B$
and the $q$-difference Toda lattice in type~A, 
and suggested an extension to other types,
the interaction between quantum $K$-theory and quantum integrable systems has attracted considerable attention; see for example, \cite{B04,FFJMM09,GLO10}.
In the simply laced case, the conjectural picture was settled
by \cite{BF14}.
For the non-simply laced case, a representation-theoretic extension was developed in \cite{BF17}, although its geometric realization has not yet been completely clarified.

Recently, when $G$ is a symplectic group, a Borel presentation of the quantum $K$-ring was obtained in \cite{KN}.
The purpose of this paper is to investigate the \emph{classical} integrable system underlying this Borel presentation.
In type~A, the relation between the quantum $K$-theory of $G/B$ and the relativistic Toda lattice has been studied; see \cite{IIM20,IIN24,IINY25,MNS25}.
The classical integrable system introduced here for $G/B$ in type C has the property that its conserved quantities coincide with the generators of the ideal giving the ring presentation in \cite{KN}.
Its Hamiltonian can be naturally regarded as a type B
analogue of the relativistic Toda lattice introduced by Ruijsenaars \cite{ruijsenaars1990relativistic}.

The relativistic Toda lattice associated with a simple root system has been studied from various aspects.
Ruijsenaars~\cite{ruijsenaars1990relativistic} derived the relativistic Toda lattices of types C and BC as a certain restriction of the Ruijsenaars-Schneider models~\cite{ruijsenaars1986} (also known as the relativistic Calogero-Moser systems).
In \cite{vandiejen1994}, van Diejen introduced the difference Cologero-Moser system (also known as the van Diejen model) whose classical limit induces the Ruijsenaars-Schneider model of type BC.
We also note that several Lax formulations of the relativistic Toda lattice have been studied in the literature.
Chen-How-Yang \cite{chen2000arxiv,chen2001} presented Lax pairs for the Ruijsenaars-Schneider models of types C and BC via reduction of that of type A, and showed their complete integrability. 
Pusztai~\cite{pusztai2020} established a Lax formalism for a deformed relativistic Toda lattice with parameters, which includes the relativistic Toda lattice of type C.
More recently, K.~Lee and N.~Lee~\cite{lee2024dimers,lee2025dimers} presented a quantum-mechanical method for constructing a $2\times 2$ Lax pair for the relativistic Toda lattices of types A, B, C, and D.

The classical integrable system introduced in this paper is expected to be related to the $\mathrm{B}_n$-type $q$-difference Toda system, which should admit both a construction from a suitable limit of the commuting family of $q$-difference operators introduced by van Diejen \cite{vandiejen1994} and a Whittaker-type construction in the sense of Gonin--Tsymbaliuk \cite{GoninTsymbaliuk}. This connection will be investigated in a separate work.

\subsection*{Organization}
In Section \ref{sec:Int_system}, we construct a \(2n\times 2n\) Lax matrix $L$ \eqref{eq:matrix_expression_of_L}, whose characteristic polynomial gives the defining ideal of the torus-equivariant quantum $K$-ring of the type C flag variety (Theorem \ref{thm:relation_to_Ktheory}).
Also, we show that the Lax equation \eqref{eq:Lax_equation} realizes the relativistic Toda lattice of type \(\mathrm{B}_n\) by determining its phase space, the flow of motion, and the Poisson structure.
Our approach is similar to the method employed by Kostant~\cite{kostant1979} for the (non-relativistic) Toda lattice; all computations are carried out within the classical framework.

In Section \ref{sec:Transformation}, we derive a B\"{a}cklund transformation by using the factorization of the Lax matrix.
This can be regarded as a time evolution of the discrete-time relativistic Toda lattice (see~\cite{suris1990,suris1996}).

\subsection*{Acknowledgments}
The authors are grateful to Kanehisa Takasaki and Yasuhiro Ohta for helpful discussions.
T.I. and S.I. were partially supported by the Grant-in-Aid for Scientific Research (C) 22K03239.
S.I. was partially supported by the Grant-in-Aid for Scientific Research (C) 23K03056.
T.K. was partially supported by the Grant-in-Aid for Research Activity Start-up 24K22842. S.N. was partly supported by the Grant-in-Aid for Scientific Research (C) 21K03198.

\section{The integrable system}\label{sec:Int_system}

\subsection{Preliminaries: a Lie group decomposition}
In this paper, we adopt a matrix-based approach to construct an integrable system.
Throughout, we employ the following matrix factorization instead of the commonly used Gauss decomposition.
Let $G=GL_{N}(\RR)$ be the general linear Lie group over the complex numbers.
In the following, we suppose $N=n$ or $N=2n$.
We denote by $B_+$ (resp.~$B_{-}$) the Borel subgroup of upper (resp.~lower)
triangular matrices in $G$, and by $U_{+}\subset B_+$ (resp.~$U_{-}\subset B_-$) the subgroup of upper (resp.~lower) triangular unipotent matrices.

Set $J=\sum_{i=1}^{n}E_{i,n+1-i}$, where $E_{ij}$ is the matrix unit.
We define two subgroups $G_+,G_-\subset G$ by
\[
\begin{gathered}
G_+=
\left.\left\{
\left[
\begin{array}{c|c}
X & Y \\\hline
O & Z 
\end{array}
\right]\in GL_{2n}(\RR)\;\right|\;
X\in B_+,\,
Z\in U_+
\right\},\\
G_-=\left.
\left\{
\left[
\begin{array}{c|c}
U & O \\\hline
V & W 
\end{array}
\right]\in GL_{2n}(\RR)\;\right|\;
W=JUJ
\right\}.
\end{gathered}
\]
It is straightforward to check that both $G_+$ and $G_-$ are of dimension $2n^2$.

\begin{prop}\label{prop:mat_decomp}
A generic $2n\times 2n$ matrix $X\in GL_{2n}(\RR)$ can be uniquely decomposed as $X=KR$, where $K\in G_-$ and $R\in G_+$.
\end{prop}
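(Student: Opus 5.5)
The plan is to reduce the statement to the ordinary Gauss ($LU$) decomposition of a single $n\times n$ matrix built from the blocks of $X$. This gives existence and uniqueness at once, and describes the genericity condition explicitly as the nonvanishing of finitely many polynomial functions.

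Write $X=\left[\begin{smallmatrix}A&B\\ C&D\end{smallmatrix}\right]$ with $n\times n$ blocks. For $K=\left[\begin{smallmatrix}U&O\\ V&JUJ\end{smallmatrix}\right]\in G_-$ and $R=\left[\begin{smallmatrix}X'&Y\\ O&Z\end{smallmatrix}\right]\in G_+$, the equation $X=KR$ is equivalent to the four equations
\[
A=UX',\qquad B=UY,\qquad C=VX',\qquad D=VY+JUJ\,Z .
\]
I will assume $A$ is invertible, which is the first genericity condition. Then $U$ and $X'$ are invertible, and we can eliminate $V$ and $Y$. This gives $VY=CX'^{-1}U^{-1}B=CA^{-1}B$, so the last equation becomes
\[
S:=D-CA^{-1}B=JUJ\,Z .
\]
Substituting $U=AX'^{-1}$ and using $J^2=I$ yields
\[
M:=JA^{-1}J\,S=(JX'^{-1}J)\,Z .
\]

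The key observation is that conjugation by $J$ reverses the order of the indices. Hence $JX'^{-1}J$ is lower triangular and invertible whenever $X'\in B_+$, and $Z\in U_+$ is upper unitriangular. So solving $X=KR$ amounts exactly to a decomposition $M=LZ$ with $L\in B_-$ and $Z\in U_+$. This is the Gauss decomposition, which exists and is unique precisely when all leading principal minors of $M$ are nonzero. That is the second genericity condition, and it is Zariski open.

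Given this decomposition, we reconstruct everything uniquely. We set $X'=JL^{-1}J\in B_+$, and then $U=AX'^{-1}$, $V=CX'^{-1}$ and $Y=U^{-1}B$. Each step is forced, which proves uniqueness. Conversely, these formulas satisfy all four block equations, which proves existence. Note that $U$ needs no extra condition beyond invertibility, since $G_-$ imposes only $W=JUJ$.

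As a consistency check, I would also observe that $G_-\cap G_+=\{I\}$. An element of the intersection is block diagonal, $\mathrm{diag}(X',Z)$, with $Z=JX'J$ both lower triangular and upper unitriangular. Therefore $Z=I$, hence $X'=I$. So the multiplication map $G_-\times G_+\to G$ is injective, and since $\dim G_-+\dim G_+=4n^2=\dim G$, its image is dense. This matches the explicit argument.

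The only delicate step is verifying that the open set where $\det A\neq 0$ and the leading minors of $M$ are nonzero is nonempty. It suffices to check that it contains $X=I$: then $A=I$, $S=I$, and $M=I$, so all these conditions hold. Hence the set is a nonempty Zariski open subset of $GL_{2n}(\CC)$, and a generic $X$ lies in it.
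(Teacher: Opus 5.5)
Your proof is correct. It reaches the same core $n\times n$ factorization as the paper, but by a different reduction.

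\textbf{The paper's route.} It first takes a full $2n\times 2n$ Gauss decomposition $X=U_1R_1$ with $U_1\in B_-$ and $R_1\in U_+\subset G_+$. This reduces to a lower triangular $X=\left[\begin{smallmatrix}A&O\\ B&C\end{smallmatrix}\right]$ (paper's notation). It then factors $C^{-1}JAJ=R_2U_2$ with $R_2\in U_+$, $U_2\in B_-$, which yields a block-diagonal $R$.

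\textbf{Your route.} You solve the four block equations directly and eliminate $V$ and $Y$ through the Schur complement. For a lower triangular $X$, your Schur complement is just the lower-right block, and your $M$ becomes $JA^{-1}JC=(C^{-1}JAJ)^{-1}=U_2^{-1}R_2^{-1}$. So the decisive step literally coincides with the paper's.

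\textbf{What your route buys.} Any factorization $X=KR$ forces both $\det A\neq 0$ and the nonvanishing of the leading principal minors of $M$. Hence your conditions describe $G_-G_+$ exactly, not just a dense subset. Uniqueness also follows, from the forced reconstruction and from $G_-\cap G_+=\{I\}$, which you check.

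\textbf{What the paper's route gives.} Its proof only establishes existence, and only on the smaller open set where all leading principal minors of $X$ are nonzero. For example, when $n=2$, any $K\in G_-$ whose block $U$ has vanishing $(2,2)$-entry lies in $G_-G_+$, yet its $3\times 3$ leading principal minor $\det U\cdot (JUJ)_{11}$ vanishes. The paper also leaves uniqueness implicit. In exchange, its proof is shorter, since it simply invokes the Gauss decomposition twice and needs no block elimination.
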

\begin{proof}
By the Gauss decomposition, a generic $X\in GL_{2n}(\RR)$ can be uniquely decomposed as $X=U_1R_1$, where $U_1\in B_-$ and $R_1\in U_{+}$.
Since $U_{+}$ is a subgroup of $G_+$, it follows that $R_1\in G_+$.
Therefore, it suffices to prove the proposition in the case where $X$ is lower triangular.

Assume $X=
\left[
\begin{array}{c|c}
A&O\\\hline
B&C\\
\end{array}
\right]$ to be lower triangular.
Then, we have $A, C\in B_-$. 
Consider the matrix $C^{-1}JAJ$ and its Gauss decomposition:
\[
C^{-1}JAJ=R_2U_2\qquad (R_2\in U_{+},\,U_2\in B_-).
\]
Then, by putting
\[
P=JU_2J,\quad Q=R_2^{-1},\quad U=AJU_2^{-1}J,\quad V=BJU_2^{-1}J,\quad W=CR_2,
\]
we obtain the matrix factorization $X=\left[
\begin{array}{c|c}
U&O\\\hline
V&W\\
\end{array}
\right]
\left[
\begin{array}{c|c}
P&O\\\hline
O&Q\\
\end{array}
\right]
$.
It is straightforward to show
\[
\left[
\begin{array}{c|c}
U&O\\\hline
V&W\\
\end{array}
\right]\in G_-\quad \text{and}\quad
\left[
\begin{array}{c|c}
P&O\\\hline
O&Q\\
\end{array}
\right]\in G_+,
\]
which conclude the proposition.
\end{proof}

\subsection{The Lax matrix via quantum $K$-theory}\label{sec:Lax_matrix}

Let $(z_1,\dots,z_n,Q_1,\dots,Q_n)$ be a set of complex variables. 
We introduce a $2n\times 2n$ \textit{Lax matrix} $L$ by
\begin{equation}\label{eq:matrix_expression_of_L}
L=
NBC^{-1},
\end{equation}
where
\[
N=
\left[
\begin{array}{c|c}
N_{11} & O\\\hline
O & N_{22}
\end{array}
\right],\quad
B=
\left[
\begin{array}{c|c}
\bm{1}_n & J\\\hline
O & \bm{1}_n
\end{array}
\right],\quad
C=
\left[
\begin{array}{c|c}
JN_{22}J & O\\\hline
P & JN_{11}J
\end{array}
\right]
\]
and
\[
N_{11}
=\begin{bmatrix}
z_1 & 1\\
&z_2 & \ddots\\
&& \ddots & 1\\
&& & z_n
\end{bmatrix},\qquad
N_{22}
=\begin{bmatrix}
1 & Q_{n-1}z_{n-1}\\
&1 & \ddots\\
&& \ddots & Q_1z_1\\
&& & 1
\end{bmatrix},\quad
P=Q_nz_n\cdot E_{1,n}.
\]

\begin{example}
When $n=2$, the Lax matrix $L$ is expressed as follows:
\[
L=
\begin{bmatrix}
(1-Q_1)z_1 & 1 & 0 & 1\\
-Q_1(1-Q_2)z_1z_2 & (1-Q_2)z_2 & 1 & 0\\
(1-Q_1)Q_1Q_2z_1 & -(1-Q_1)Q_2 & (1-Q_1)z_2^{-1} & Q_1\\
-Q_1Q_2 & Q_2z_1^{-1} & -z_1^{-1}z_2^{-1} & z_1^{-1}
\end{bmatrix}.
\]
\end{example}

\begin{rem}
In~\cite[Eq.~(3.117)]{pusztai2020}, Pusztai introduced a $2n\times 2n$ Lax matrix for a deformed relativistic Toda lattice with one parameter $\kappa$.
At $\kappa=0$ limit, it reduces to the relativistic Toda lattice of type C.
This matrix has a form similar to our $L$, but its relationship remains unclear.
\end{rem}

For $1\leq i\leq 2n$, let $(-1)^iF_i(z_1,\dots,z_n,Q_1,\dots,Q_n)$ be the coefficient of $\lambda^{2n-i}$ in the characteristic polynomial $\det(\lambda E-L)$:
\[
\det(\lambda E-L)=\sum_{i=0}^{2n}(-1)^{i}F_i(z_1,\dots,z_n,Q_1,\dots,Q_n)\lambda^{2n-i}.
\]

Standard arguments based on the Lindstr\"{o}m–Gessel–Viennot theorem (LGV theorem, see \cite[Theorem~5.4.1]{Stanley1999}) show that the polynomials $F_i$ admit a combinatorial expression as described in the following.
Consider the weighted graph shown in Figure \ref{fig:graph_description}.
\begin{figure}[htbp]
    \centering
\begin{tikzpicture}[scale = .9, yscale = 1.2]
\foreach \x in {2} {
\foreach \y in {2,3,5,6} {\draw[fill] (\x,\y) circle[radius = 2pt];}}%
\foreach \x in {5} {
\foreach \y in {1,2,4,5} {\draw[fill] (\x,\y) circle[radius = 2pt];}}%
\draw(0,6)node[left]{$L_1$}--node[pos=.1,above]{$z_1$}(12,6)node[right]{};
\draw(0,5)node[left]{$L_2$}--node[pos=.1,above]{$z_2$}(12,5)node[right]{};
\draw(0,4)node[left]{$L_3$}--node[pos=.1,above]{$z_3$}(12,4)node[right]{};
\draw(0,3)node[left]{$L_{\overline{3}}$}--node[pos=.1,above]{$z_3^{-1}$}(12,3)node[right]{};
\draw(0,2)node[left]{$L_{\overline{2}}$}--node[pos=.1,above]{$z_2^{-1}$}(12,2)node[right]{};
\draw(0,1)node[left]{$L_{\overline{1}}$}--node[pos=.1,above]{$z_1^{-1}$}(12,1)node[right]{};
\draw (2,6)--node[right]{$-Q_1$}(5,5);
\draw (2,5)--node[right]{$-Q_2$}(5,4);
\draw (2,3)--node[right]{$-Q_2$}(5,2);
\draw (2,2)--node[right]{$-Q_1$}(5,1);
\draw[dashed,thick] (8,4)node[circle,fill,inner sep=1.5pt] {} --node[left]{$-Q_3$}(8.5,3)node[circle,fill,inner sep=1.5pt] {};
\draw[dashed,thick] (8.5,5)node[circle,fill,inner sep=1.5pt] {}--node{$-Q_2Q_3$}(10,2)node[circle,fill,inner sep=1.5pt] {};
\draw[dashed,thick] (9,6)node[circle,fill,inner sep=1.5pt] {}--node[right]{$-Q_1Q_2Q_3$}(11.5,1)node[circle,fill,inner sep=1.5pt] {};
\end{tikzpicture}
\caption{An example of a weighted graph for $n=3$.
There are six horizontal lines $L_k$ and $L_{\overline{k}}$ ($k=1,2,3$),
four short segments, and three dashed segments.
 Weights are assigned to some segments as indicated alongside them.} 

\label{fig:graph_description}
\end{figure}
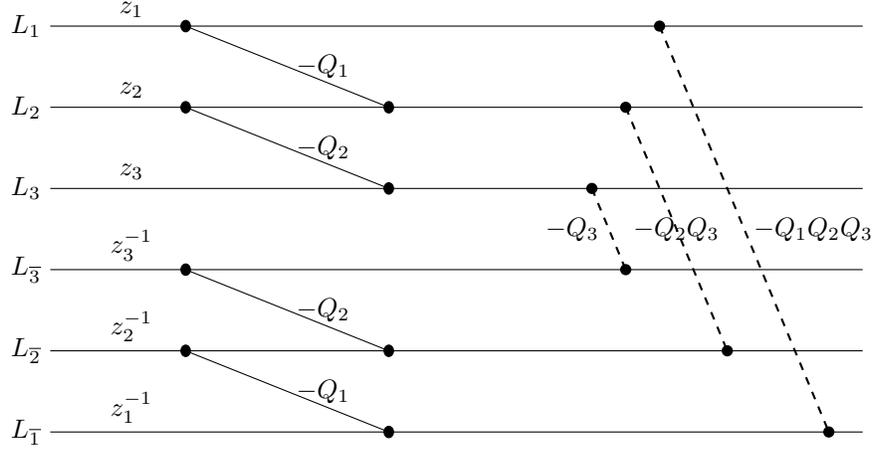
Let 
\[
I=\{1<2<\dots<n<\overline{n}<\dots<\overline{2}<\overline{1}\}
\]
be a totally ordered set consisting of $2n$ elements.
Consider $2n$ horizontal lines, arranged in this order
and indexed by the elements of $I$.
Let $L_x$ denote the line corresponding to $x\in I$.
There are $2(n-1)$ short segments connecting adjacent lines,
except between $L_n$ and $L_{\overline{n}}$,
and $n$ dashed segments connecting $L_k$ and $L_{\overline{k}}$.
The weight $-Q_k$ is assigned to 
the short segment connecting $L_k$ and $L_{k+1}$, and the weight $-Q_{k-1}$ is assigned to
the segment connecting $L_{\overline{k}}$ and $L_{\overline{k-1}}$.
The weight $-Q_kQ_{k+1}\cdots Q_n$ is assigned to 
the dashed segment connecting $L_{k}$ and $L_{\overline{k}}$.
In addition, $z_k$ and $z_k^{-1}$ are assigned to the leftmost segments on $L_k$ and $L_{\overline{k}}$, respectively.
Throughout, each (directed) path proceeds from left to right.
For $x\leq y\in I$, let $\gamma(x,y)$ denote the shortest path in the weighted graph connecting the left endpoint of $L_x$ and the right endpoint of $L_y$.

Let $w(x,y)$ be the \textit{weight} of the path $\gamma(x,y)$, the product of all weights along it.
Then, by an LGV-type argument, we have the following combinatorial description of $F_i$:
\begin{equation}\label{eq:path_description_of_F}
F_i=\sum_{
\substack{
x_1\leq y_1<x_2\leq y_2<\dots<x_i\leq y_i
}
}\prod_{l=1}^i
w(x_l,y_l).
\end{equation}
A proof of \eqref{eq:path_description_of_F} will be given in Appendix \ref{sec:appendix}.

Explicitly, we have
\[
w(x,y)=
\begin{cases}
z_k & ((x,y)=(k,k))\\
-Q_kz_k & ((x,y)=(k,k+1))\\
z_k^{-1} & ((x,y)=(\overline{k},\overline{k}))\\
-Q_{k-1}z_k^{-1} & ((x,y)=(\overline{k},\overline{k-1}))\\
-z_kQ_kQ_{k+1}\cdots Q_n & ((x,y)=(k,\overline{k})\\
z_kQ_kQ_{k+1}\cdots Q_n & ((x,y)=(k,\overline{k+1}))\\
0 & (\text{otherwise}).
\end{cases}
\]
\begin{example}
From \eqref{eq:path_description_of_F}, we have
\[
F_1=(1-Q_1)z_1+\dots+(1-Q_n)z_n+(1-Q_{n-1})z_n^{-1}+\dots+(1-Q_1)z_2^{-1}+z_1^{-1}.
\]
\end{example}

\begin{example}
Let $n=2$ and $i=2$.
The contribution $w(x_1,y_1,x_2,y_2):=w(x_1,y_1)w(x_2,y_2)$ is expressed as follows:
\begin{gather*}
w(1,1,2,2)=z_1z_2,\quad
w(1,1,2,\overline{2})=-z_1z_2Q_2,\quad
w(1,1,\overline{2},\overline{2})=z_1z_2^{-1},\quad
w(1,1,\overline{2},\overline{1})=-z_1z_2^{-1}Q_1,\\
w(1,1,\overline{1},\overline{1})=1,\quad
w(1,2,\overline{2},\overline{2})=-z_1z_2^{-1}Q_1,\quad
w(1,2,\overline{2},\overline{1})=z_1z_2^{-1}Q_1^2,\quad
w(1,2,\overline{1},\overline{1})=-Q_1,\\
w(1,\overline{2},\overline{1},\overline{1})=Q_1Q_2,\quad
w(2,2,\overline{2},\overline{2})=1,\quad
w(2,2,\overline{2},\overline{1})=-Q_1,\quad
w(2,2,\overline{1},\overline{1})=z_1^{-1}z_2,\quad\\
w(2,\overline{2},\overline{1},\overline{1})=-z_1^{-1}z_2Q_2,\quad
w(\overline{2},\overline{2},\overline{1},\overline{1})=z_1^{-1}z_2^{-1}.
\end{gather*}
Therefore, we have
\begin{align*}
F_2&=\sum_{x_1\leq y_1<x_2\leq y_2}w(x_1,y_1,x_2,y_2)\\
&=
(1-Q_2)z_1z_2+(1-Q_1)^2z_1z_2^{-1}+2-2Q_1+Q_1Q_2+(1-Q_2)z_1^{-1}z_2+z_1^{-1}z_2^{-1}.    
\end{align*}
\end{example}

Since $w(k,\overline{k+1})=-w(k,\overline{k})$, the expression \eqref{eq:path_description_of_F} is slightly improved:
\begin{equation}\label{eq:imp_exp_of_F}
F_i=
\sum_{
\substack{
x_1\leq y_1<x_2\leq y_2<\dots<x_i\leq y_i\\
\text{if }(x_l,y_l)=(k,\overline{k+1})\text{ then } x_{l+1}=\overline{k}
}
}\prod_{l=1}^i
w(x_l,y_l).
\end{equation}
Note that we have by {\cite[Proposition~5.7]{KN}
}
\begin{equation}
F_i=F_{2n+1-i}\quad 
\text{for $1\le i\le 2n$.}
\end{equation}
The following theorem establishes the connection between the Lax matrix and the quantum $K$-theory  of type C, proved as \cite[Theorem~3.6]{KN}:
\begin{thm}\label{thm:relation_to_Ktheory}
For $i=1,\dots,2n$, the polynomials $F_i(z_1,\dots,z_n,Q_1,\dots,Q_n)$
coincide with the polynomials appearing in \cite{KN}
that define the torus-equivariant quantum $K$-ring of the type $\mathrm{C}_n$
flag variety.
More precisely, the elements
\[
F_i(z_1,\dots,z_n,Q_1,\dots,Q_n)
-
e_i(e^{\epsilon_1},\dots,e^{\epsilon_n},
e^{-\epsilon_1},\dots,e^{-\epsilon_n})
\]
agree with the generators of the defining ideal given in \cite{KN}.
\end{thm}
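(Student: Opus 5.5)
The proof will be a reduction to \cite{KN}, using the combinatorial description \eqref{eq:path_description_of_F} (or its refined form \eqref{eq:imp_exp_of_F}) as the bridge. In \cite{KN}, the Borel presentation of the torus-equivariant quantum $K$-ring of $Sp_{2n}/B$ is generated by the $z_j^{\pm1}$ (the quantum line-bundle classes) and the Novikov variables $Q_j$. The ideal is generated by elements of the form $\mathcal{F}_i - e_i(e^{\pm\epsilon})$, where $\mathcal{F}_i$ is a quantum deformation of $e_i(z_1,\dots,z_n,z_1^{-1},\dots,z_n^{-1})$. This $\mathcal{F}_i$ is written there as a sum over admissible index sequences in $I$, with weights built from $z_k^{\pm1}$ and monomials in the $Q$'s. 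The plan is therefore not to recompute any $K$-theory. Instead I will show that the characteristic-polynomial coefficients $F_i$ coincide with the polynomials $\mathcal{F}_i$ of \cite{KN}.

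The steps are as follows.
\begin{enumerate}
\item Recall the explicit formula for $\mathcal{F}_i$ in \cite{KN}. I expect it to be a sum over chains in $I$ in which each chosen letter is either a single $k$ or $\overline{k}$, or a ``quantum'' pair $(k,k+1)$, $(\overline{k},\overline{k-1})$, $(k,\overline{k})$.
\item Appeal to the LGV description \eqref{eq:path_description_of_F}, whose proof is deferred to Appendix~\ref{sec:appendix}. It expresses $F_i$ as a sum over nonintersecting collections of $i$ paths $\gamma(x_l,y_l)$, with weights $w(x,y)$ given by the table above.
\item Build a weight-preserving bijection between the interval sequences $x_1\le y_1<\dots<x_i\le y_i$ with nonzero weight and the index data in \cite{KN}. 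Intervals of the form $(k,k)$ and $(\overline{k},\overline{k})$ correspond to ordinary factors $z_k^{\pm1}$. Intervals $(k,k+1)$ and $(\overline{k},\overline{k-1})$ correspond to the type-A-like quantum corrections $-Q_kz_k$ and $-Q_{k-1}z_k^{-1}$. The intervals $(k,\overline{k})$ and $(k,\overline{k+1})$ correspond to the long-root correction $\mp z_kQ_k\cdots Q_n$.
\end{enumerate}

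In step 3 the refined form \eqref{eq:imp_exp_of_F} should be used first. The relation $w(k,\overline{k+1})=-w(k,\overline{k})$ forces cancellations, and the surviving configurations are those in which $(k,\overline{k+1})$ is immediately followed by an interval beginning at $\overline{k}$. I expect this constrained pattern to match a specific admissibility rule in \cite{KN}. As a sanity check, I will verify the cases $F_1$ and $n=2$, $F_2$ computed above against \cite{KN}.

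The main obstacle is step 3. The normalisation of \cite{KN} may differ from ours: it may use a different ordering of indices, a substitution such as $Q_k\mapsto$ a product of $Q$'s, or an expression via a determinant or a generating function rather than a path sum. If the formula in \cite{KN} is given as a determinant or as coefficients of a product of $2\times 2$ or $2n\times 2n$ matrices, I would avoid the bijection altogether. Instead I would show directly that the characteristic polynomial of $L=NBC^{-1}$ equals $\det(\lambda C-NB)/\det C$. Since $\det C=\det N_{22}\det N_{11}=z_1\cdots z_n$, this reduces to a linear-pencil determinant in $\lambda$, which I would then match with the generating polynomial of \cite{KN}. That identification would use row and column operations exploiting the block structure and the conjugation by $J$.

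Finally, the symmetry $F_i=F_{2n+1-i}$ from \cite[Proposition~5.7]{KN} serves as a consistency check. Subtracting the classical term $e_i(e^{\pm\epsilon})$ then gives exactly the generators of the ideal.
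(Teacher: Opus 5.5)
Your proposal is correct and takes essentially the same route as the paper, which gives no independent argument: the paper derives the path expansion \eqref{eq:path_description_of_F} in Appendix~\ref{sec:appendix} and then hands the identification with the ideal generators over to \cite[Theorem~3.6]{KN}. The appendix derivation starts, as in your fallback, from $\det(\lambda E-L)=\det(\lambda C-NB)/\det C$ and applies LGV. The weight-preserving matching in your step~3 is never carried out explicitly in the paper; it is absorbed into that citation, and subtracting $e_i(e^{\epsilon_1},\dots,e^{-\epsilon_n})$ then gives the generators as you say.
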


\subsection{The Lax equation}

Let
\[
\mathfrak{g}_+
=
\left\{
\left[
\begin{array}{c|c}
X & Y \\\hline
0 & Z
\end{array}\right]
\in \mathfrak{gl}_{2n}(\RR)
\;\middle|\;
\begin{array}{l}
X \text{ is upper triangular},\\
Z \text{ is an upper triangular matrix whose diagonal entries are $0$}
\end{array}
\hspace{-.5em}\right\}
\]
be the Lie algebra corresponding to $G_+$, and let
\[
\mathfrak{g}_-
=
\left\{
\left[
\begin{array}{c|c}
U & 0 \\\hline
V & W
\end{array}\right]
\in \mathfrak{gl}_{2n}(\RR)
\;\middle|\;
W = J U J
\right\}
\]
be the Lie algebra corresponding to $G_-$.
Then, the general linear Lie algebra $\mathfrak{gl}_{2n}(\RR)$ decomposes as $\mathfrak{gl}_{2n}(\RR)=\mathfrak{g}_+\oplus \mathfrak{g}_-$.
Let $\pi_\pm:\mathfrak{gl}_{2n}(\RR)\to \mathfrak{g}_\pm$ be the linear projections along the direct sum decomposition.

Let 
$\langle X,Y\rangle:=\mathrm{tr}(XY)$ for any $2n\times 2n$ matrices $X,Y$.
For any conjugation-invariant differentiable function $\varphi:GL_{2n}(\RR)\to \RR$ and $L\in GL_{2n}(\RR)$, we define the differential $d\varphi_L
$ as a unique element of $\mathfrak{gl}_{2n}(\RR)
$ satisfying
\[
\langle d\varphi_L,X\rangle
=\left.\frac{d}{d\epsilon}\varphi(e^{\epsilon X}L)\right|_{\epsilon=0},\qquad \forall X\in \mathfrak{gl}_{2n}(\RR).
\]
For example, if $\varphi(L)=\frac{1}{2}\mathrm{tr}(L^2)$, then we have $d\varphi_L=L$.
Via this identification, $L$ can also be regarded as an element of $\mathfrak{gl}_{2n}(\RR)$. 
Hence, the expression $\pi_{\pm}(L)$ makes sense.

Consider the \textit{Lax equation}:
\begin{equation}\label{eq:Lax_equation}
\frac{d}{dt}L=[L,\pi_+(L)].
\end{equation}
The phase space of \eqref{eq:Lax_equation} admits a Lie theoretic interpretation.
Let $\Gamma_1,\Gamma_2\subset GL_{2n}(\RR)$ be the affine subvarieties defined by
\begin{gather*}
\Gamma_1=\left\{
L\in GL_{2n}(\RR)\left|\;
\mbox{$L^{-1}$ is of the form } L^{-1}=
\left[
\begin{array}{cccc|cccccc}
\ast & \cdots & \ast & \ast & \ast & \cdots & \ast& \ast\\
\ast & \cdots & \ast & \ast & \ast & \cdots & \ast& \ast\\
0 & \ddots & \vdots & \vdots & \vdots & \vdots & \vdots& \vdots\\
0 & 0 & \ast & \ast & \ast & \cdots & \ast& \ast\\\hline
0 & \cdots & 0 & \ast & \ast& \cdots & \ast& \ast \\
0 & \cdots & 0 & 0 & 1& \cdots & \ast& \ast\\
\vdots & \vdots & \vdots & \vdots & 0 & \ddots& \vdots&\vdots\\
0 & \cdots & 0 & 0 & 0 & 0 &1& \ast\\
\end{array} 
\right]
\right\}\right.,\\
\Gamma_2=\left\{L=
\left.\left[
\begin{array}{c|c}
U & J\\\hline
\ast & W
\end{array}
\right]
\in GL_{2n}(\RR)\right|
\mbox{$L^{-1}$ is of the form }
L^{-1}=
\left[
\begin{array}{c|c}
JWJ & -J\\\hline
\ast & JUJ
\end{array}
\right]
\right\}.
\end{gather*}
Let
\[
\Gamma:=\Gamma_1\cap \Gamma_2
\]
be the intersection of these varieties.
One can verify that the Lax matrix $L$ is contained in $\Gamma$.

\begin{lemma}\label{lemma:Lax_decomposition_to_NBC}
For a generic element $L\in \Gamma$, there exists a string of $2n$ complex numbers 
$$(z_1,\dots,z_n,Q_1,\dots,Q_n)\in \RR^{2n}$$ such that $L=NBC^{-1}$ as in \eqref{eq:matrix_expression_of_L}.
\end{lemma}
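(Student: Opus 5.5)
The plan is to use the factorization of Proposition~\ref{prop:mat_decomp}, applied to $L^{-1}$ rather than to $L$, and then to read off $N$, $B$ and $C$ from the two factors using the constraints defining $\Gamma_1$ and $\Gamma_2$. Along the way we also count dimensions.

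\emph{Step 1 (recasting the target).} The identity $L=NBC^{-1}$ is equivalent to
\[
L^{-1}=C\,B^{-1}N^{-1}.
\]
Here $C\in G_-$: indeed $C$ is block lower triangular, its upper-left block is $JN_{22}J$, which is lower unitriangular, and its lower-right block satisfies $JN_{11}J=J(\cdot)J$. However, the upper-left and lower-right blocks of $C$ are not $J$-conjugate to each other. So I will not match $C$ directly with the $G_-$ factor. Instead I absorb the diagonal of $N_{11}$ and write
\[
C=K_0\,D,\qquad K_0\in G_-,\quad D=\mathrm{diag}(\dots).
\]
Similarly, $B^{-1}N^{-1}$ is block upper triangular, with upper-left block $N_{11}^{-1}\in B_+$ and lower-right block $N_{22}^{-1}\in U_+$, so $B^{-1}N^{-1}\in G_+$. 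After absorbing the diagonal correction $D$, we get
\[
L^{-1}=K\,R,\qquad K\in G_-,\quad R\in G_+.
\]
This is exactly the decomposition of Proposition~\ref{prop:mat_decomp}, and it is unique for generic $L$.

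\emph{Step 2 (the factors have the required special shape).} For generic $L\in\Gamma$, take the unique factorization $L^{-1}=KR$ from Proposition~\ref{prop:mat_decomp}. I then need to show two things:
\begin{itemize}
\item[(i)] $R$ has the form $B^{-1}N^{-1}$, that is, its upper-left block is upper bidiagonal with unit superdiagonal, its lower-right block is bidiagonal, and its upper-right block is tied to the others through $J$;
\item[(ii)] $K$ has the form of $C$ with a single corner entry $P=Q_nz_nE_{1,n}$.
\end{itemize}
The conditions defining $\Gamma_1$ say that $L^{-1}$ is upper Hessenberg-like, with unit entries on the lower-right subdiagonal. These constrain $K$ to be bidiagonal: I will argue inductively on the columns via the Gauss decomposition step used in the proof of Proposition~\ref{prop:mat_decomp}. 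The conditions defining $\Gamma_2$ (the $J$ blocks in $L$ and $-J$ in $L^{-1}$, together with the relation swapping $U$ and $W$ up to $J$-conjugation) force the symmetric coupling, namely that the same $z_k$ and $Q_k$ appear in $N_{11}$, $JN_{11}J$, $N_{22}$ and $JN_{22}J$.

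\emph{Step 3 (dimension count as a cross-check).} The map
\[
(z,Q)\mapsto NBC^{-1}\in\Gamma
\]
is injective on a dense set, because $z$ and $Q$ are recovered from the entries, as in the example for $n=2$. So if $\Gamma$ is irreducible of dimension $2n$, the image is dense. Then the generic statement follows by showing that the image is closed in the open set where the factorization exists.

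The main obstacle is Step 2(ii), and in particular proving that $\Gamma_2$ forces the lower-left block of $K$ to be a single entry and forces the $z$'s in the two halves to coincide. My first attempt will be to compare $L$ with $L^{-1}$ blockwise using the $\Gamma_2$ relations, and to solve for the entries successively in $z_1,\dots,z_n$, then $Q_1,\dots,Q_n$. I will use the explicit formulas for $w(x,y)$ to check which entries are free. If this turns out to be too messy, I will instead fall back on the dimension argument in Step 3, establishing $\dim\Gamma=2n$ by counting the independent constraints.
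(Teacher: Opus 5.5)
Your Steps~1--2 use the wrong factorization, and the shapes you aim to establish in (i)--(ii) are false, so the plan cannot be completed.

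\emph{Diagonal absorption fails.} For $n\ge 2$ no diagonal $D=\mathrm{diag}(D_1,D_2)$ puts $CD^{-1}$ in $G_-$. The condition $W=JUJ$ would read
\[
JN_{11}J\,D_2^{-1}=N_{22}\,JD_1^{-1}J,
\]
and here the left side is lower bidiagonal with nonzero subdiagonal, while the right side is upper triangular.

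\emph{The actual factors have the wrong shape.} $G_-\cap G_+$ is trivial, so the factorization of Proposition~\ref{prop:mat_decomp} is unique, and the paper computes it in Section~\ref{sec:Transformation}: $C=KR^{-1}$ as in \eqref{eq:KR}. Hence for $L=NBC^{-1}$ the factorization of $L^{-1}=CB^{-1}N^{-1}$ is $K\cdot(R^{-1}B^{-1}N^{-1})$, where $K_{11}$ is tridiagonal with unit superdiagonal. So even on the image of $(z,Q)\mapsto NBC^{-1}$, the $G_-$ factor is not $C$ up to any diagonal factor. Your expectation that $\Gamma_1$ makes it bidiagonal also fails, because $G_-$ imposes no triangularity on its upper-left block. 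The $G_\pm$ splitting is the one adapted to the flow and to the B\"acklund transformation, not to recognizing $NBC^{-1}$.

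\emph{Step~3 does not rescue the argument.} It presupposes that $\Gamma$ is irreducible of dimension $2n$. Counting constraints does not give this, and it is essentially what the lemma is used to establish.

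What you need is a factorization whose factors have exactly the shapes of $C$ and $N^{-1}$. The paper writes $L^{-1}=YB^{-1}Z$, which is equivalently an LU decomposition of $L^{-1}B$. Here $Y$ is block lower triangular with $Y_{11}\in U_-$, $Y_{22}\in B_-$, and $Z$ is block upper triangular with $Z_{11}\in B_+$, $Z_{22}\in U_+$. The argument then runs as follows.
\begin{enumerate}
\item $\Gamma_1$ forces $Y_{11}$ and $Y_{22}$ to be lower bidiagonal, with unit subdiagonal in $Y_{22}$, and $Y_{21}=kE_{1,n}$.
\item The $\Gamma_2$ condition $(L^{-1})_{11}=JL_{22}J$ reads $JY_{11}Z_{11}J=Z_{22}^{-1}Y_{22}^{-1}$. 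Comparing these two (upper unipotent)$\times$(lower triangular) factorizations of the same matrix gives $JY_{11}J=Z_{22}^{-1}$ and $JY_{22}J=Z_{11}^{-1}$.
\item The condition $(L^{-1})_{12}=Y_{11}(Z_{12}-JZ_{22})=-J$ then forces $Z_{12}=O$.
\end{enumerate}
Thus $Z^{-1}=N$ and $Y=C$, and the $z_i,Q_i$ are read off from their entries.
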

\begin{proof}
By the Gauss decomposition, a generic element $L\in \Gamma$ can be decomposed as $L=Z^{-1}BY^{-1}$, where $Y$ (\textrm{resp}.~$Z$) is a block lower triangular (\textrm{resp}.~block upper triangular) matrix expressed as
\[
Y=\left[
\begin{array}{c|c}
Y_{11} &O\\\hline
Y_{21} &Y_{22}\\
\end{array}\right],\quad
Z=\left[
\begin{array}{c|c}
Z_{11} &Z_{12}\\\hline
O &Z_{22}\\
\end{array}
\right],\quad 
Y_{11}\in U_-,\ 
Y_{22}\in B_-,\ 
Z_{11}\in B_+,\ 
Z_{22}\in U_+. 
\]
Hence, the matrix $L^{-1}=YB^{-1}Z$ is expressed as
\[
L^{-1}=\left[
\begin{array}{c|c}
Y_{11}Z_{11} &Y_{11}(Z_{12}-JZ_{22})\\\hline
Y_{21}Z_{11} &Y_{21}(Z_{12}-JZ_{22})+Y_{22}Z_{22}
\end{array}
\right].
\]
Since $L\in \Gamma_1$, all entries of $Y_{21}$, except for the $(1,n)$-th one, are zero.
For the same reason, all entries of $Y_{11}$ and $Y_{22}$, except for the main-diagonal and sub-diagonal ones, are zero.
On the other hand, since $L\in \Gamma_2$, we have the equation $JY_{11}Z_{11}J=Z_{22}^{-1}Y_{22}^{-1}$, which implies that all entries of $Z^{-1}_{11}$ and $Z^{-1}_{22}$, except for the main-diagonal and sub-diagonal ones, are zero.
In particular, all sub-diagonal entries of $JY_{11}J$ and $Z_{22}^{-1}$ coincide. This fact leads to the equation $JY_{11}J=Z_{22}^{-1}$.
Because $Y_{11}(Z_{12}-JZ_{22})=-J$, we find $Z_{12}=O$.
Therefore, again from $L\in \Gamma_2$, we obtain $JY_{22}J=Z_{11}^{-1}$.

We eventually obtain
\[
Y=\left[
\begin{array}{c|c}
JZ_{22}^{-1}J &O\\\hline
Y_{21} &JZ_{11}^{-1}J\\
\end{array}\right],\quad
Z^{-1}=\left[
\begin{array}{c|c}
Z_{11}^{-1} & O\\\hline
O &Z_{22}^{-1}\\
\end{array}
\right],\quad Y_{21}=k\cdot E_{1,n}.
\]
for some $k\in \CC$.
By defining the parameters $z_1,\dots,z_n,Q_1,\dots,Q_n$ by
\begin{gather*}
z_i=(\text{the $(i,i)$-th entry of $Z_{11}^{-1}$}),\\
Q_i=z_i^{-1}\cdot (\text{the $(n-i,n-i+1)$-th entry of $Z_{22}^{-1}$}),\quad
Q_n=z_n^{-1}\cdot (\text{the $(1,n)$-th entry of $Y_{21}$}),
\end{gather*}
we obtain $N=Z^{-1}$ and $C=Y$, where $N$ and $C$ are the $n\times n$ matrices defined in \eqref{eq:matrix_expression_of_L}.
This leads to the desired expression.
\end{proof}

From Lemma \ref{lemma:Lax_decomposition_to_NBC}, one finds that $\Gamma$ contains an open dense subset isomorphic to $\RR^{2n}$.
This $2n$-dimensional subset can be regarded as the space of Lax matrices.

\subsection{The flow of motion}

The Lax equation \eqref{eq:Lax_equation} defines a flow on the $2n$-dimensional variety $\Gamma$.
To show this fact, we need to prove that the flow preserves $\Gamma$.

\begin{prop}\label{prop:adjoint_action}
The variety $\Gamma_1$ is preserved by the adjoint action $L\mapsto g_+Lg_+^{-1}$ with $g_+\in G_+$, and the variety $\Gamma_2$ is preserved by the adjoint action $L\mapsto g_-Lg_-^{-1}$ with $g_-\in G_-$.
\end{prop}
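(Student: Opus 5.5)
Both invariances come from rewriting the defining conditions of $\Gamma_1$ and $\Gamma_2$ as conditions on $L^{-1}$ that are stable under multiplication by the relevant group. Note first that $(g L g^{-1})^{-1} = g L^{-1} g^{-1}$. So in both cases it is enough to show that the shape imposed on $L^{-1}$ survives conjugation by the relevant group.

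\emph{The $\Gamma_1$ part.} Let $S_1\subset \mathfrak{gl}_{2n}(\RR)$ be the affine space of matrices of the shape displayed in the definition of $\Gamma_1$. The zero pattern of $S_1$ is that of a Hessenberg-type matrix: nonzero entries are allowed only on or above the first subdiagonal, except that the entry at position $(n+1,n)$ is also allowed. In addition, the lower-right block has its subdiagonal entries at positions $(n+i+1,n+i)$ equal to $1$. The plan is to show $g_+ M g_+^{-1}\in S_1$ for all $M\in S_1$ and $g_+\in G_+$, by checking left and right multiplication separately.

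First consider $M\mapsto M g_+^{-1}$. Since $g_+^{-1}$ is upper triangular, it only forms linear combinations of the columns of $M$ with columns to its left. This preserves the property that column $j$ vanishes below row $j+1$ (for $j\neq n$, and below row $n+1$ for $j=n$). The affected subdiagonal entries are multiplied by the diagonal entry of $g_+^{-1}$ at the right column. The same reasoning applies to left multiplication by the upper triangular $g_+$, which mixes each row with the rows below it. This preserves the zero pattern and rescales the subdiagonal entry at $(i+1,i)$ by $(g_+)_{i+1,i+1}$.

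Combining the two, the subdiagonal entry at $(i+1,i)$ of $g_+Mg_+^{-1}$ equals $(g_+)_{i+1,i+1}\,M_{i+1,i}\,(g_+)_{i,i}^{-1}$. For $i\ge n+1$ both diagonal entries belong to $Z\in U_+$ and equal $1$, so the entries $1$ are preserved. This is exactly why the lower block of $G_+$ is required to be unipotent. The entries at $i\le n$ are unconstrained in $S_1$, so nothing needs checking there. This step is routine bookkeeping.

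\emph{The $\Gamma_2$ part.} This is the main step. The plan is to reformulate $\Gamma_2$ via an anti-automorphism. Set $\Theta(M)=\mathcal{J}M^{-1}\mathcal{J}^{-1}$ with a suitable block matrix $\mathcal{J}$, for instance
\[
\mathcal{J}=\left[\begin{array}{c|c} O & J\\\hline J & O\end{array}\right],
\]
possibly twisted by signs. The aim is to express the condition on $L^{-1}$ in the definition of $\Gamma_2$ as a relation between $L$ and $L^{-1}$: the diagonal blocks of $L^{-1}$ are the $J$-conjugates of the opposite blocks of $L$, and the $(1,2)$ blocks are $J$ and $-J$.

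For $g_-=\left[\begin{smallmatrix}U&O\\ V&JUJ\end{smallmatrix}\right]$, block multiplication shows that conjugating $L$ transforms the diagonal blocks as $U_L\mapsto U U_L U^{-1}+(\text{terms involving the }(1,2)\text{ block }J)$. The $(1,2)$ block transforms as $J\mapsto U J (JUJ)^{-1}=UJJU^{-1}J=J$, so it is preserved. The same computation for $L^{-1}$ preserves the block $-J$. It remains to check that the relations ``diagonal block of $L^{-1}$ $=$ $J$-conjugate of the opposite diagonal block of $L$'' are preserved. Write $L'=g_-Lg_-^{-1}$ and compute the blocks of $L'$ and $L'^{-1}$ explicitly. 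The correction terms involve $V$: they are of the form $-U J (JUJ)^{-1}V U^{-1}=-JVU^{-1}$ and similar. These should match after using the identity $W=JUJ$ and the fact that $JJ=\mathbf{1}$.

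The main obstacle I expect is this matching of the $V$-dependent correction terms in the diagonal blocks for $L'$ and $L'^{-1}$. If the direct computation becomes messy, I would instead look for a $\mathcal{J}$ (for instance with a sign $\left[\begin{smallmatrix}O&J\\-J&O\end{smallmatrix}\right]$) making $\Gamma_2$ part of a fixed-point set of an involution that commutes with conjugation by $G_-$. The upper-left part of the correct $\mathcal{J}$ is forced by the block conditions.
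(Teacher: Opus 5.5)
Your proposal is correct and matches the paper's proof, which just says both invariances follow by direct calculation: your argument (upper Hessenberg pattern, with subdiagonal entries rescaled by ratios of diagonal entries of $g_+$) settles $\Gamma_1$ completely, and the $\Gamma_2$ matching you left as ``should match'' does go through in a few lines. Writing $g_-=\left[\begin{smallmatrix}A&O\\ V&JAJ\end{smallmatrix}\right]$, $L=\left[\begin{smallmatrix}U&J\\ \ast&W\end{smallmatrix}\right]$ and $L'=g_-Lg_-^{-1}$, one gets $L'_{11}=AUA^{-1}-JVA^{-1}$ and $L'_{22}=VA^{-1}J+JAJWJA^{-1}J$, while $(L'^{-1})_{11}=AJWJA^{-1}+JVA^{-1}=JL'_{22}J$ and $(L'^{-1})_{22}=JAUA^{-1}J-VA^{-1}J=JL'_{11}J$; so the fallback involution $\Theta$ is not needed.
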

\begin{proof}
Direct calculations can verify these statements.
\end{proof}

By virtue of Propositions \ref{prop:mat_decomp} and \ref{prop:adjoint_action}, the general solution to the Lax equation \eqref{eq:Lax_equation} is constructed as follows.
Fix an initial state $L_0\in \Gamma$.
Let $\exp(L_0t)=a(t)^{-1}b(t)$ be the matrix decomposition with $a(t)\in G_+$ and $b(t)\in G_-$.

Define $L(t):=a(t)L_0a(t)^{-1}$.
Since $L_0\exp(L_0t)=\exp(L_0t)L_0$, we have
\begin{equation}\label{eq:L_t}
L(t)=a(t)L_0a(t)^{-1}=b(t)L_0b(t)^{-1}.
\end{equation}
By Proposition \ref{prop:adjoint_action}, we have $L(t)\in \Gamma=\Gamma_1\cap \Gamma_2$ for all $t$.

By differentiating both sides of \eqref{eq:L_t}, we obtain
\[
L'(t)=a'(t)L_0a(t)^{-1}-a(t)L_0a(t)^{-1}a'(t)a(t)^{-1}=[a'(t)a(t)^{-1},L(t)].
\]
On the other hand, by differentiating $\exp(L_0t)=a(t)^{-1}b(t)$, we obtain
\[
L_0\exp(L_0t)=-a(t)^{-1}a'(t)a(t)^{-1}b(t)+a(t)^{-1}b'(t),
\]
which implies
\[
L(t)
=-a'(t)a(t)^{-1}+b'(t)b(t)^{-1}.
\]
Since $-a'(t)a(t)^{-1}\in \mathfrak{g}_+$ and $b'(t)b(t)^{-1}\in \mathfrak{g}_-$, we obtain $\pi_+(L)=-a'(t)a(t)^{-1}$ and $\pi_-(L)=b'(t)b(t)^{-1}$.
Therefore, the matrix $L(t)\in \Gamma$ is a solution to the Lax equation \eqref{eq:Lax_equation}.
This means that the flow defined by the differential equation \eqref{eq:Lax_equation} preserves the phase space $\Gamma$.

\subsection{Computation of the Hamiltonian}

Direct calculations show that the $(i,i)$-entry of $\pi_+(L)$ is
\[
\begin{cases}
(1-Q_1)z_1-z_1^{-1}& (i=1),\\
(1-Q_i)z_i-(1-Q_{i-1})z_{i}^{-1}    & (1<i\leq n),\\
0 & (n<i\leq 2n).
\end{cases}
\]
By comparing the main-diagonal and the sub-diagonal entries on both sides of $(L^{-1})'=[L^{-1},\pi_+(L)]$, we obtain the system of differential equations:
\begin{equation}\label{eq:ordinal_diff_1}
\frac{Q_i'}{Q_i}=-(1-Q_{i-1})z_i^{-1}+(1-Q_i)(z_i+z_{i+1}^{-1})-(1-Q_{i+1})z_{i+1},\quad
\frac{Q_n'}{Q_n}=(1-Q_n)z_n-(1-Q_{n-1})z_n^{-1},
\end{equation}
\begin{equation}\label{eq:ordinal_diff_2}
\frac{z_i'}{z_i}=Q_i(z_i+z_{i+1}^{-1})-Q_{i-1}(z_{i-1}+z_{i}^{-1}),\quad
\frac{z_n'}{z_n}=Q_nz_n-Q_{n-1}(z_{n-1}+z_n^{-1}),
\end{equation}
where $1\leq i<n$ and $Q_0=z_0=0$.

Let $\{\cdot,\cdot\}$ be a Poisson bracket satisfying
\begin{equation}\label{eq:poisson_Q_z}
\{Q_i,Q_j\}=\{z_i,z_j\}=0,\quad \{Q_i,z_i\}=Q_iz_i,\quad \{Q_i,z_{i+1}\}=-Q_iz_{i+1},\quad \{Q_i,z_j\}=0\ \ (j\neq i,i+1).
\end{equation}
Then, the system \eqref{eq:ordinal_diff_1}, \eqref{eq:ordinal_diff_2} is rewritten as the Hamilton equation
\[
Q_i'=\{Q_i,H\},\quad
z_i'=\{z_i,H\},
\]
where $H$ is a Hamiltonian function
\[
H=F_1=\mathrm{tr}(L)=
(1-Q_1)z_1+(1-Q_2)z_2+\dots +(1-Q_n)z_n+(1-Q_{n-1})z_n^{-1}+
\dots+(1-Q_1)z_2^{-1}
+z_1^{-1}.
\]

Introduce the \textit{canonical variables} $(q_1,\dots,q_n,p_1,\dots,p_n)$ satisfying 
\[
\{q_i,q_j\}=
\{p_i,p_j\}=0,\qquad \{q_i,p_j\}=\delta_{i,j}
\]
and the variable change
\[
Q_i=
\begin{cases}
-e^{q_i-q_{i+1}} & (1\leq i<n),\\
-e^{q_n} & (i=n),
\end{cases}
\quad z_i=\exp\left( p_i+\frac{1}{2}\log
\left(
\frac{1+e^{q_{i-1}-q_i}}{1+e^{q_i-q_{i+1}}}
\right)
\right),\quad (q_0=-\infty,\,q_{n+1}=0).
\]
Direct calculations show that the change of variables is compatible with \eqref{eq:poisson_Q_z}.

Let $\bm{\alpha}_i=
\begin{cases}
\bm{e}_{i}-\bm{e}_{i+1} & (1\leq i<n)\\
\bm{e}_{n} & (i=n)
\end{cases}$ be the simple roots of type $\mathrm{B}_n$.
Set $\bm{q}:=(q_1,\dots,q_n)$.
Then, in the canonical variables, the Hamiltonian function $H$ is expressed as
\[
\begin{aligned}
H&=
2\sum_{i=1}^{n}\cosh(p_i)
\sqrt{1+\exp({\bm{\alpha}_{i-1}\cdot \bm{q})}}
\sqrt{1+\exp({\bm{\alpha}_{i}\cdot \bm{q})}}\\
&=
2\sum_{i=1}^{n-1}\cosh(p_i)
\sqrt{1+e^{q_{i-1}-q_{i}}}
\sqrt{1+e^{q_i-q_{i+1}}}
+2\cosh(p_n)
\sqrt{1+e^{q_{n-1}-q_{n}}}
\sqrt{1+e^{q_n}}.
\end{aligned}
\]
This Hamiltonian is naturally regarded as the relativistic Toda lattice
of type $\mathrm{B}_n$.
Indeed, it provides the natural $\mathrm{B}_n$-analogue of the Hamiltonians
of types $\mathrm{C}_n$ and $\mathrm{BC}_n$ given in
\cite[Eq.~(6.24), (6.25)]{ruijsenaars1990relativistic}.

\section{B\"{a}cklund transformation}\label{sec:Transformation}

From the matrix factorization of the Lax matrix, we naturally derive a \textit{B\"{a}cklund transformation} (also known as a \textit{Darboux-B\"{a}cklund transformation}), which is a change of variables that preserves the flow of motion.
The resulting transformation defines a birational map from $\Gamma$ to itself.

Let $C$ be the square matrix defined in \eqref{eq:matrix_expression_of_L}.
Let $a_i=Q_iz_i$, $b_i=z_i$, $M_i=1-\frac{a_i}{b_{i+1}}$, and $N_i=1-\frac{a_ia_{i+1}}{b_{i+1}b_{i+2}}$.
Following the procedure in the proof of Proposition \ref{prop:mat_decomp}, we decompose $C$ as $C=KR^{-1}$ with 
\begin{equation}\label{eq:KR}
K=\left[
\begin{array}{c|c}
K_{11}&O\\\hline
K_{12}&K_{22}
\end{array}\right]\in G_-,\qquad
R=\left[
\begin{array}{c|c}
R_{11}&O\\\hline
O&R_{22}
\end{array}
\right]\in G_+,
\end{equation}
where
\[
K_{11}=\begin{bmatrix}
\frac{b_1}{M_1} & 1&\\
\frac{a_1b_1}{M_1} & \frac{N_1b_2}{M_2}&1&\\
&\frac{M_1a_2b_2}{M_2} & \ddots&\ddots&\\
&&\ddots & \frac{N_{n-2}b_{n-1}}{M_{n-1}}&1\\
&&&\frac{M_{n-2}a_{n-1}b_{n-1}}{M_{n-1}} & b_n
\end{bmatrix},\quad K_{12}=M_{n-1}a_nb_n\cdot E_{1,n},\quad K_{22}=JK_{11}J
\]
and
\[
R_{11}=
\begin{bmatrix}
\frac{b_1}{M_1} & 1&\\
& \frac{M_1}{M_2}b_2&1&\\
&& \ddots&\ddots&\\
&&& \frac{M_{n-2}}{M_{n-1}}b_{n-1}&1\\
&&& & M_{n-1}b_n
\end{bmatrix},\quad
R_{22}=
\begin{bmatrix}
 1&\frac{M_{n-2}a_{n-1}b_{n-1}}{M_{n-1}b_n}\\
 &1&\ddots\\
 &&\ddots&\frac{M_1a_2b_2}{M_2b_3}\\
 &&& 1&\frac{a_1b_1}{M_1b_2}\\
 &&& &1
\end{bmatrix}.
\]

Then, the Lax matrix $L=NBC^{-1}$ can be factorized as $L=(NBR)K^{-1}$, where $NBR\in G_+$ and $K^{-1}\in G_-$.
By switching the positions and multiplying the matrices, we define the new matrix
\[
L^+:=K^{-1}(NBR).
\]
Since $L^+=(NBR)^{-1}L(NBR)=K^{-1}LK$, the matrix $L^+$ also belongs to the phase space $\Gamma$ by Proposition \ref{prop:adjoint_action}.
Then, by applying Lemma~\ref{lemma:Lax_decomposition_to_NBC} again, $L^+$ can be decomposed as
\[
L^+ = (N^+ B R^+)(K^+)^{-1},
\]
where the matrices $N^+$, $R^+$, and $K^+$ have a form similar to $N$, $R$, and $K$, respectively.
By comparing the matrix entries, we find that the matrices $N^+$, $R^+$, and $K^+$ are obtained from $N$, $R$, and $K$ by applying a birational transformation $(z_i,Q_i)\mapsto (z_i^+,Q_i^+)$:
\begin{equation}\label{eq:birational_map}
Q_i^+:=\frac{M_{i-1}M_{i+1}}{M_i^2}\frac{z_i^2}{z_{i+1}^2}Q_i,\qquad
z_i^+:=\frac{1-Q_{i-1}^+}{1-Q_{i}^+}\frac{M_{i-1}}{M_{i}}z_i,\qquad
(M_i=1-Q_i\frac{z_i}{z_{i+1}}),
\end{equation}
where $Q_0^+=0$ and $M_0=M_{n}=M_{n+1}=z_{n+1}=1$.

{
\begin{prop}
The transformation $L\mapsto L^+$ preserves the flow of motion \eqref{eq:Lax_equation}:
\[
\frac{d}{dt}L^+=[L^+,\pi_+(L^+)].
\]
\end{prop}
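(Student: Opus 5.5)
We have to show that if $L(t)$ solves the Lax equation \eqref{eq:Lax_equation}, then $L^+(t)=K(t)^{-1}L(t)K(t)$ also solves it, where $K(t)$ is the $G_-$-factor of $C(t)$. Since $L^+=g^{-1}Lg$ for $g=K\in G_-$ (equivalently $g=NBR\in G_+$), differentiating gives
\[
\frac{d}{dt}L^+=[L^+,\,\pi_+(L)^{K}-K^{-1}K'],\qquad \pi_+(L)^{K}:=K^{-1}\pi_+(L)K .
\]
So the proposition reduces to the identity
\begin{equation*}
K^{-1}\pi_+(L)K-K^{-1}K'=\pi_+(L^+)+c(t)\,\bm{1}
\end{equation*}
for some scalar $c(t)$, or more generally up to something commuting with $L^+$.

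The cleanest route is to use the explicit solution from the previous subsection rather than compute $K'$. First I write $\exp(tL_0)=a(t)^{-1}b(t)$ with $a\in G_+$, $b\in G_-$, so that $L(t)=a L_0a^{-1}=bL_0b^{-1}$. Next I set $S:=NBR\in G_+$ and write $L=SK^{-1}$. The expression for $L^+$ is then
\[
L^+(t)=K(t)^{-1}L(t)K(t)=\bigl(K(t)^{-1}b(t)K_0\bigr)\,L_0^+\,\bigl(K(t)^{-1}b(t)K_0\bigr)^{-1},
\]
where $L_0^+=K_0^{-1}L_0K_0$. The key step is to show that $\exp(tL_0^+)$ factorizes as $\tilde a(t)^{-1}\tilde b(t)$ with $\tilde b(t)=K(t)^{-1}b(t)K_0\in G_-$ and some $\tilde a(t)\in G_+$. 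Granting this, the argument of the previous subsection, applied verbatim with initial state $L_0^+\in\Gamma$, shows that $\tilde bL_0^+\tilde b^{-1}=L^+(t)$ solves \eqref{eq:Lax_equation}.

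To prove the key step, I compute
\[
\exp(tL_0^+)=K_0^{-1}\exp(tL_0)K_0=K_0^{-1}a^{-1}bK_0 .
\]
I then need $K_0^{-1}a^{-1}bK_0\,(K^{-1}bK_0)^{-1}=K_0^{-1}a^{-1}K$ to lie in $G_+$. Using $L_0=S_0K_0^{-1}$ and $L=aL_0a^{-1}=SK^{-1}$, we get $aS_0K_0^{-1}a^{-1}=SK^{-1}$, hence
\[
K_0^{-1}a^{-1}K=S_0^{-1}a^{-1}S .
\]
The right-hand side is a product of elements of $G_+$, so it lies in $G_+$. This proves the factorization, with $\tilde a=S^{-1}aS_0$.

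The main obstacle is not algebraic but definitional. The argument above needs $K(t)$ to be a smooth family whose factorization $L=S K^{-1}$ is of the prescribed $NBR\cdot K^{-1}$ shape; by uniqueness in Proposition~\ref{prop:mat_decomp}, this holds generically. I also need $L(t)$ to stay in the generic locus where Lemma~\ref{lemma:Lax_decomposition_to_NBC} applies, which holds for small $t$ and then extends by analyticity. I will check that uniqueness of the $G_-G_+$ decomposition pins down $K$ as a function of $L$, so that $L^+$ is determined by $L$ and the computation is consistent.
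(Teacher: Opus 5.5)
Your proof is correct and is essentially the paper's: the paper first factors $\exp(tL_0^+)=(a^+)^{-1}b^+$ and then uses uniqueness of the $G_+G_-$ factorization of $L$ to obtain $K^{-1}=b^+K_0^{-1}b^{-1}$. You establish the same identity in the other direction, by checking directly that $\tilde b=K^{-1}bK_0$ and $\tilde a=S^{-1}aS_0$ factor $\exp(tL_0^+)$, and the rest of your argument is the same as the paper's.

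The only slip is in the preliminary reduction you then abandon: differentiating $K^{-1}LK$ gives $[L^+,K^{-1}\pi_+(L)K+K^{-1}K']$ (plus, not minus), but this plays no role in your actual argument.
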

\begin{proof}
Let $L_0=L(0)$ be the initial value. 
Consider the matrix decomposition $L_0=M_0K_0^{-1}$ with $M_0\in G_+$, $K_0\in G_-$, and define $L_0^+:=K_0^{-1}L_0K_0$.
Let $\exp(L_0^+t)=(a^+)^{-1}b^+$ be the decomposition with $a^+\in G_+$, $b^+\in G_-$, and $\widehat{L}:=a^+L^+_0(a^+)^{-1}=b^+L^+_0(b^+)^{-1}$.
By construction, the matrix $\widehat{L}$ satisfies the differential equation $\frac{d}{dt}\widehat{L}=[\widehat{L},\pi_+(\widehat{L})]$ with initial value $L_0^+$.
Hence, it suffices to show 
\begin{equation}\label{eq:L+isLhat}
L^+=\widehat{L}.
\end{equation}

Let $\exp(L_0t)=a^{-1}b$, with $a\in G_+$ and $b\in G_-$.
Then, we have $\exp(L_0^+t)=\exp(K_0^{-1}L_0K_0t)=K_0^{-1}\exp(L_0t)K_0=K_0^{-1}a^{-1}bK_0$, which implies $K_0^{-1}a^{-1}bK_0=(a^+)^{-1}b^+$.
By using the equation $K_0^{-1}a^{-1}=(a^+)^{-1}b^+K_0^{-1}b^{-1}$, we have $L=aL_0a^{-1}=aM_0K_0^{-1}a^{-1}=aM_0(a^+)^{-1}\cdot b^+K_0^{-1}b^{-1}$.
Since $aM_0(a^+)^{-1}\in G_+$ and $b^+K_0^{-1}b^{-1}\in G_-$, we have $K^{-1}=b^+K_0^{-1}b^{-1}$.
Therefore, we conclude
\[
\begin{aligned}
L^+&=K^{-1}LK\\
&=b^+K_0^{-1}b^{-1}\cdot L\cdot bK_0(b^+)^{-1}\\
&=b^+K_0^{-1}b^{-1}\cdot bL_0b^{-1}\cdot bK_0(b^+)^{-1}\\
&=b^+L_0^+(b^+)^{-1}\\
&=\widehat{L},
\end{aligned}
\]
which implies \eqref{eq:L+isLhat}.
\end{proof}
}
Then, the map \eqref{eq:birational_map} defines a B\"{a}cklund transformation.
We can also regard the birational map \eqref{eq:birational_map} as a \textit{discrete-time relativistic Toda lattice of type $\mathrm{B}_n$}.

\begin{example}
When $n=2$, the B\"{a}cklund transformation $(z_i,Q_i)\mapsto (z_i^+,Q_i^+)$ is expressed as
\begin{equation}\label{eq:backlund_2}
\begin{gathered}
Q_1^+=\frac{z_1^2}{(z_2-Q_1z_1)^2}Q_1,\quad
Q_2^+=(z_2-Q_1z_1)z_2Q_2,\\
z_1^+=\frac{1}{1-Q_1^+}\frac{z_1z_2}{z_2-Q_1z_1},\quad
z_2^+=\frac{1-Q_1^+}{1-Q_2^+}(z_2-Q_1z_1).
\end{gathered}
\end{equation}
By direct calculations using \eqref{eq:ordinal_diff_1} and \eqref{eq:ordinal_diff_2}, we have
\begin{align*}
\frac{(Q_1^+)'}{Q_1^+}
&=2\frac{z_1'}{z_1}-2\frac{z_2'-(Q_1z_1)'}{z_2-Q_1z_1}+\frac{Q_1'}{Q_1}\\
&=
2\frac{Q_1z_1^2
+Q_1z_1z_2^{-1}}{z_2-Q_1z_1}
-z_1^{-1}+(1+Q_1)(z_1+z_2^{-1})-(1+Q_2)z_2\\\displaybreak[1]
&=
-(z_1^+)^{-1}+(1-Q_1^+)(z_1^++(z_{2}^+)^{-1})-(1-Q_{2}^+)z_{2}^+,\\
\frac{(Q_2^+)'}{Q_2^+}
&=
\frac{z_2'-(Q_1z_1)'}{z_2-Q_1z_1}+\frac{z_2'}{z_2}+\frac{Q_2'}{Q_2}\\
&=
-Q_1(z_1+z_2^{-1})+(1+Q_2)z_2-(1-Q_1)z_2^{-1}\\\displaybreak[1]
&=(1-Q_2^+)z_2^+-(1-Q_1^+)(z_2^+)^{-1},\\
\frac{(z_1^+)'}{z_1^+}
&=
-\frac{(1-Q_1^+)'}{1-Q_1^+}-\frac{z_2'-(Q_1z_1)'}{z_2-Q_1z_1}+\frac{z_1'}{z_1}+\frac{z_2'}{z_2}\\
&=\frac{(Q_1^+)'}{1-Q_1^+}+\frac{Q_1z_1^2+Q_1z_1z_2^{-1}}{z_2-Q_1z_1}\\\displaybreak[1]
&=Q_1^+(z_1^++(z_2^+)^{-1}),\\
\frac{(z_2^+)'}{z_2^+}
&=
\frac{(1-Q_1^+)'}{1-Q_1^+}-
\frac{(1-Q_2^+)'}{1-Q_2^+}+\frac{z_2'-(Q_1z_1)'}{z_2-Q_1z_1}\\
&=
-\frac{(Q_1^+)'}{1-Q_1^+}+
\frac{(Q_2^+)'}{1-Q_2^+}+Q_2z_2-\frac{Q_1z_1^2+Q_1z_1z_2^{-1}}{z_2-Q_1z_1}\\
&=Q_2^+z_2^+-Q_1^+(z_1^++(z_2^+)^{-1}).
\end{align*}
Hence, the B\"{a}cklund transformation \eqref{eq:backlund_2} preserves the flow of the relativistic Toda lattice.
\end{example}

\begin{example}
It is straightforward to check that the transformation \eqref{eq:backlund_2} preserves the Hamiltonian function $H=(1-Q_1)z_1+(1-Q_2)z_2+(1-Q_1)z_2^{-1}+z_1^{-1}$.
\end{example}

\section{Summary and discussions}

In this paper, we have presented a new $2n\times 2n$ Lax pair for the relativistic Toda lattice of type $\mathrm{B}_n$.
The coefficients of its characteristic polynomial admit a combinatorial expression in terms of a weighted graph.
Via this expression, we can compare them with the defining ideal of the quantum $K$-ring of the type C flag variety.
The Lax equation admits a matrix-based analysis in the same style as that by Kostant for the (non-relativistic) Toda lattice.

It is expected that our Lax matrix would provide an explicit connection between the quantum $K$-theory of type C and integrable systems.
For example, the B\"{a}cklund transformation defines an extremely nontrivial birational transformation of the equivariant quantum $K$-theory.
In future work, we plan to investigate the algebraic structure of the torus-equivariant quantum $K$-theory of type C, such as a $K$-Peterson isomorphism, Schubert calculus, and $K$-theoretic symmetric functions.

\appendix

\section{Proof of Equation \eqref{eq:path_description_of_F}}\label{sec:appendix}
In this appendix, we give a proof of \eqref{eq:path_description_of_F}.
Let $\mathbf{Q}_i=Q_1Q_2\cdots Q_i$ and $\mathbf{z}_i=z_1z_2\cdots z_i$.
Define
\[
\mathbf{a}_i:=\mathbf{Q}_{i-1}\cdot \mathbf{z}_{i-1}, \quad
\mathbf{b}_i=
\mathbf{Q}_{n}\cdot \mathbf{z}_{n-i}
\]
and $D=\mathrm{diag}(\mathbf{a}_1,\dots,\mathbf{a}_n,\mathbf{b}_1,\dots,\mathbf{b}_n)$, $Z=\mathrm{diag}(1,\dots,1,z_n,z_{n-1},\dots,z_1)$.
Then, the matrix $C$ defined in \eqref{eq:matrix_expression_of_L} is factorized as 
\[
C=Z\cdot (D\Lambda D^{-1}),\quad \text{where }\quad \Lambda=\sum_{k=1}^{2n}E_{kk}+ \sum_{k=1}^{2n-1}E_{k+1,k}.
\]
Therefore, the characteristic polynomial $\det(\lambda E-L)$ is rewritten as
\[
\det(\lambda E-L)
=\det(\lambda C-NB)\det(C^{-1})
=\det(Z)\cdot \det(\lambda D\Lambda D^{-1}-Z^{-1}NB)\cdot\det(C^{-1}).
\]
Since $\det Z=\det C$, we obtain
\[
\det(\lambda E-L)
=
\det(\lambda \Lambda-D^{-1}Z^{-1}NBD).
\]

Let $M:=D^{-1}Z^{-1}NBD$.
Direct calculations show that $M$ is factorized as
\[
\begin{bmatrix}
z_1\\
& \ddots\\
& &z_n\\
& & &z_n^{-1}\\
& & &&\ddots\\
& & &&&z_1^{-1}\\
\end{bmatrix}
\begin{bmatrix}
1&Q_1\\
 & 1 & \ddots \\
 & & \ddots & Q_{n-1}\\
 & & & 1 & 0\\ 
& & & & 1 & Q_{n-1}\\ 
&& & & & 1 & \ddots\\ 
&& & & & & \ddots&Q_1\\ 
&& & & & & &1
\end{bmatrix}
\left[
\begin{array}{cccccc}
1&& &  & & \mathbf{Q}_1\\
&\ddots& & & \rotatebox{-45}{$\vdots$}\\
&&1 & \mathbf{Q}_n\\
& & & 1\\
& & & & \ddots\\
& & & & & 1\\
\end{array}
\right].
\]

Consider the weighted graph discussed in Section \ref{sec:Int_system}. (An example for $n=3$ is displayed in Figure \ref{fig:graph_description}.)
In this appendix, we identify $I=\{1,2,\dots,n,\overline{n},\dots,\overline{2},\overline{1}\}$ with $\{1,2,\dots,2n\}$ by identifying $\overline{i}$ with $2n+1-i$.
Then, by the LGV-theorem, we find that the $(p,q)$-entry of $M$ is equal to $(-1)^{q-p}w(q,p)$.
On the other hand, the polynomial $\det(\lambda \Lambda -M)$ is expanded as
\begin{align}
\det(\lambda \Lambda -M)
&=\sum_{\sigma\in S_{2n}}\mathrm{sgn}(\sigma)
\prod_{k=1}^{2n} \{\lambda\cdot (\delta_{k,\sigma(k)}+\delta_{k,\sigma(k)+1})-(-1)^{\sigma(k)-k}w(k,\sigma(k))\}    \\
&=
\sum_{i=0}^{2n}(-1)^{2n-i}\lambda^{2n-i}
\sum_{
\substack{
K\subset \{1,\dots,2n\},\\
\# K=i
}
}
\sum_{\tau}\mathrm{sgn}(\tau)
\prod_{k\in K} (-1)^{\tau(k)-k}w(k,\tau(k)),\label{eq:expand_char}
\end{align}
where $\tau$ runs over the set 
\[
\mathfrak{X}_K:=
\left\{\tau\in S_{2n}\left|
\begin{aligned}
\text{ (i) }&\tau(l)-l\in \{0,-1\}\text{ if } l\notin K\\
\text{ (ii) }&\tau(k)\geq k\text{ if } k\in K.
\end{aligned}
\right\}\right..
\]
Note that if $\tau\in \mathfrak{X}_K$, then we have 
\begin{equation}\label{eq:rel_of_XK}
k_1<k_2 \ \Rightarrow\ k_1\leq \tau(k_1)<k_2\leq \tau(k_2)\quad 
\end{equation}
for $k_1,k_2\in K$.
Therefore, the collection of paths
\[
\left.\{\gamma(k,\tau(k))\right|k\in K\}
\]
is automatically non-intersecting.
This implies that, if $\tau\in \mathfrak{X}_K$, then the signature $\mathrm{sign}(\tau)$ is equal to $\prod_{k\in K} (-1)^{\tau(k)-k}$.
Hence, we have
\begin{align*}
\sum_{\tau}\mathrm{sgn}(\tau)
\prod_{k\in K} (-1)^{\tau(k)-k}w(k,\tau(k))
&=
\sum_{\tau}
\prod_{k\in K} w(k,\tau(k)).
\end{align*}
Substituting this to \eqref{eq:expand_char}, we obtain
\[
F_i=
\sum_{\tau}
\prod_{k\in K} w(k,\tau(k)).
\]
From \eqref{eq:rel_of_XK}, we deduce that
\[
F_i=\sum_{x_1\leq y_1<\dots<x_i\leq y_i} \prod_k w(x_k,y_k),
\]
which concludes \eqref{eq:path_description_of_F}.

\end{document}